\documentclass[12pt]{article}
\usepackage[utf8x]{inputenc}
\usepackage{amsthm}
\usepackage{amsmath}
\usepackage{amssymb}
\usepackage{tikz}
\usepackage{enumitem}
\usepackage{mdwlist}
\usepackage{verbatim}
\usepackage[british]{babel}
\usepackage{scalefnt}
\usepackage{tikz,fullpage}
\usepackage{tkz-berge}
\usepackage{varioref}
\usepackage{cite}
\usepackage{caption}
\usepackage[T1]{fontenc}
\usepackage[twoside,a4paper]{geometry}
\usepackage{graphicx}

\title{A note on total and list edge-colouring of graphs of tree-width 3}
\author{Richard Lang\\ Universidad de Chile,\\ 
Santiago, Chile
 \\ \small rlang@dim.uchile.cl }

\theoremstyle{plain}
\numberwithin{equation}{section}
\numberwithin{figure}{section}
\newtheorem{conjecture}{Conjecture}
\newtheorem{theorem}{Theorem}
\newtheorem{lemma}{Lemma}[section]

\theoremstyle{definition}

\newcommand{\ch}{\mathrm{ch}}

\begin{document}


\maketitle
\begin{abstract}
 It is shown that Halin graphs are $\Delta$-edge-choosable and that graphs of tree-width 3 are
 $(\Delta+1)$-edge-choosable and $(\Delta +2)$-total-colourable.
\end{abstract}

\section{Introduction}
\label{sec:introduction}
  In this note we present some results concerning the list chromatic index $\ch'(G)$ and the total chromatic number $\chi''(G)$
  of graphs $G$ of tree-width 3 (see Section~\ref{sec:preliminaries} for proper definitions). One of the central open  questions in the field of list colouring is known as the \emph{list colouring conjecture}: 
 \begin{conjecture}
  \label{con:list-colouring-conjecture}
  For all graphs $G$ it holds that $\ch'(G) = \chi'(G)$.
 \end{conjecture}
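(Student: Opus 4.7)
The plan is to adapt Galvin's kernel-method proof, which established the conjecture for bipartite graphs, to arbitrary $G$. Given a proper edge-colouring $c$ of $G$ with $k := \chi'(G)$ colours, I would orient each edge of the line graph $L(G)$ from the endpoint of smaller colour to the endpoint of larger colour. Galvin showed that, for bipartite $G$ together with a suitable bipartition tiebreak, every induced subdigraph of the resulting orientation of $L(G)$ admits a kernel, and that such a kernel-perfect orientation certifies $\ch(L(G)) \le k$ by a Bondy-Boppana-Siegel style argument. The first step of a general proof would therefore be to investigate under what additional conditions the colour-induced orientation of $L(G)$ remains kernel-perfect once $G$ is no longer bipartite.

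The central obstruction is the triangle: three mutually adjacent edges of $G$ form a triangle in $L(G)$, and under the colour-induced orientation this triangle becomes a transitive tournament, whose vertex set has no independent dominating set and hence no kernel. I would attempt to reorient the three arcs inside each such triangle, perhaps cyclically, so as to preserve both acyclicity and kernel-perfectness; the coordination across overlapping triangles and odd cycles of higher length would require a delicate global argument, perhaps driven by a careful inductive structural decomposition of $G$. As fallback strategies one could invoke the Alon-Tarsi polynomial method, hunting for a nonvanishing monomial of appropriate degree in the graph polynomial of $L(G)$, or Kahn's probabilistic entropy-compression machinery, which already yields the asymptotic inequality $\ch'(G) \le (1+o(1))\chi'(G)$ and might conceivably be sharpened to an exact equality.

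The main obstacle is that Conjecture~\ref{con:list-colouring-conjecture} is a famous open problem dating back to the 1970s and attributed independently to Vizing, Gupta, Albertson-Collins, and Bollob\'as-Harris; no known technique closes the gap between $\ch'(G)$ and $\chi'(G)$ for arbitrary graphs. Triangle reorientations have been explored without success, the Alon-Tarsi polynomial of $L(G)$ admits no combinatorial interpretation guaranteeing a nonzero leading coefficient in the non-bipartite case, and Kahn's probabilistic approach is inherently asymptotic. A genuine proof would require a new structural or algebraic insight into line graphs of general graphs; accordingly, the present paper, as its abstract makes clear, does not attempt the full conjecture but instead contributes partial progress by verifying the statement (and its total-colouring counterpart) for the restricted class of graphs of tree-width three, for which the bounded treewidth yields enough structural control to make a direct inductive attack feasible.
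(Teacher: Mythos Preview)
You correctly recognise that Conjecture~\ref{con:list-colouring-conjecture} is an open problem and that the paper does not attempt a proof; it is stated purely as motivation. There is therefore no ``paper's own proof'' to compare against, and your conclusion that a genuine proof would require ideas beyond current techniques is appropriate.

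That said, two points in your discussion deserve correction. First, your description of the obstruction in the kernel argument is inverted: a transitive tournament on three vertices \emph{does} have a kernel (its sink), whereas it is the \emph{cyclic} directed triangle that lacks one. In Galvin's orientation of $L(G)$ for bipartite $G$, the triangles of $L(G)$ arising at a common vertex are indeed oriented transitively, and kernel-perfectness holds; the difficulty for non-bipartite $G$ is rather that odd cycles in $G$ give rise to odd holes in $L(G)$ whose orientation need not be kernel-perfect.

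Second, your summary of the paper's contribution is slightly off. The paper does \emph{not} verify Conjecture~\ref{con:list-colouring-conjecture} for all graphs of tree-width~3: Theorem~\ref{thm:treewidth-leq-3_implies_delta+1-choosable} establishes only the Vizing-type bound $\ch'(G)\le\Delta(G)+1$, which coincides with $\chi'(G)$ only when $G$ is Class~2. The full conjecture is confirmed here only for Halin graphs (Theorem~\ref{cor:halin}), where $\ch'(G)=\Delta(G)=\chi'(G)$ is shown. The paper itself notes that the extension of Conjecture~\ref{con:list-colouring-conjecture} to tree-width~3 graphs of large maximum degree is deferred to a companion paper.
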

  Conjecture~\ref{con:list-colouring-conjecture} appeared for the first time in print in 1985 \cite{boha85}, but was,
 according to Alon~\cite{Alon93}, Woodall~\cite{woodall01} and Jensen and Toft~\cite{JeTo95}, suggested independently by Vizing, 
 Albertson, Collins, Erdös, 
 Tucker and Gupta in the late seventies.
 If Conjecture~\ref{con:list-colouring-conjecture} is true we have $\chi''(G) \leq \Delta(G) +3$ for all graphs $G$\footnote{If we colour the vertices of $G$ using the colours $C = \{ 1, \ldots, \Delta(G)+3\},$ then for each edge there are still $\Delta(G)+1$ colours of $C$ available, which permits a total colouring if Conjecture~\ref{con:list-colouring-conjecture} holds.}.
 The \emph{total colouring conjecture} asserts a little more:
  \begin{conjecture}
  \label{con:total-colouring-conjecture}
  For all graphs $G$ it holds that $\chi''(G) \leq \Delta(G)+2$.
 \end{conjecture}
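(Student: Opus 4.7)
The final statement is the Total Colouring Conjecture of Behzad and Vizing, a major open problem since the mid 1960s. I do not propose a full general proof; instead my plan is to outline the generic attack strategy and to indicate how it specialises to the class actually treated in this paper, namely graphs of tree-width at most $3$.

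The standard approach is the minimal-counterexample method. One assumes $G$ minimises $|V(G)|+|E(G)|$ subject to $\chi''(G)>\Delta+2$, so that every proper subgraph of $G$ is totally $(\Delta+2)$-colourable. The plan is then to locate an unavoidable local configuration $H\subseteq G$, typically centred at a vertex of small degree or at a small cut, delete or contract part of $H$, apply the inductive hypothesis to obtain a total $(\Delta+2)$-colouring of the reduced graph, and extend this colouring back to the removed elements. The extension step relies on one of the standard toolkits: Vizing-type adjustments of an induced edge colouring, Kempe-chain recolourings to free a colour simultaneously at a vertex and on one of its incident edges, or Hall-type matching arguments applied to the list of colours still available at each uncoloured element of the palette of size $\Delta+2$. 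Every configuration that can be reduced in this way becomes forbidden in the minimal counterexample, and a discharging argument should finally derive a contradiction from the resulting structural restrictions combined with the edge count of $G$.

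The main obstacle in the general case is precisely that no finite family of reducible configurations is known to be unavoidable: without structural hypotheses on $G$ one cannot control the neighbourhood of a low-degree vertex, and the extension step has no handle. This is why after half a century the best unconditional bound remains Molloy and Reed's probabilistic $\chi''(G)\le\Delta+C$ for an absolute constant $C$, rather than $\Delta+2$. In the setting of the present paper the obstacle becomes tractable: a graph of tree-width $3$ admits a tree decomposition whose bags have at most four vertices, and a leaf bag supplies an explicit short list of reducible configurations on which the extension arguments above can be carried out by hand. It is that restricted version of the statement, rather than the full conjecture, that the paper actually establishes.
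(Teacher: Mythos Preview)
You are right that the statement is the Behzad--Vizing conjecture and that the paper does not prove it in general; it only establishes the special case of tree-width at most~$3$ (Theorem~\ref{thm:treewidth-leq-3_implies_total_delta+2_colourable}). So your refusal to offer a full proof is appropriate.

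Your sketch of the restricted argument, however, diverges from what the paper actually does in two concrete ways. First, there is no discharging step. Because a smooth width-$3$ tree decomposition always has a leaf bag, the reducible configuration is handed to you for free: the vertex unique to that leaf bag has degree at most~$3$, and by looking one level up in the tree one finds a second vertex of degree at most~$4$. The structural hypothesis replaces discharging entirely. Second, the extension step does not use Vizing fans or Kempe chains. The paper colours the smaller graph by minimality, then extends across the removed vertices using Galvin's theorem on the small bipartite piece together with straightforward greedy arguments; the list sizes that survive after deleting colours used by neighbours are computed explicitly and turn out to suffice.

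There is also a point your outline omits: the paper's Lemma~\ref{lem:treewidth-leq-3_and_max_deg_geq_6_implies_delta+2-total-choosable} requires lists of size $\max(5,\Delta)+2$, not $\Delta+2$, so it only handles $\Delta\ge 5$ directly. The small-$\Delta$ cases are not done by the same reduction; they are imported wholesale from Kostochka and Rosenfeld, who proved the conjecture for all graphs with $\Delta\le 5$. Without citing that external result the tree-width-$3$ argument would be incomplete.
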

  Conjecture~\ref{con:total-colouring-conjecture} has been promoted independently by Behzad~\cite{behz63} and Vizing~\cite{viz-total-76}.
 Our first result is a list version of Vizing's theorem for graphs of tree-width 3.

\begin{theorem}
\label{thm:treewidth-leq-3_implies_delta+1-choosable}
 For a graph $G$ of tree-width 3 it holds that $\ch'(G) \leq \Delta(G) +1$.
\end{theorem}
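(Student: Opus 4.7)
The plan is to proceed by induction on $|V(G)|$, handling small cases ($|V(G)| \leq 4$) directly (e.g.\ $K_4$ is easily seen to be $4$-edge-choosable). Since partial 3-trees are 3-degenerate, $G$ always contains a vertex $v$ with $\deg(v) \leq 3$. The basic strategy is to delete $v$, invoke the induction hypothesis on $G-v$ (which is still of tree-width at most $3$ and of maximum degree at most $\Delta(G)$), and extend the resulting edge-list colouring to the at most three edges incident to $v$.

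When $\deg(v) \leq 2$ the extension is an immediate greedy step: each of the at most two edges at $v$ forbids at most $\Delta(G)$ of the $\Delta(G)+1$ colours in its list. The hard case is $\deg(v)=3$ with neighbours $w_1,w_2,w_3$. Here each edge $e_i=vw_i$ retains at least $\Delta(G)+1-(\deg(w_i)-1) \geq 2$ admissible colours once the colours used at $w_i$ in $G-v$ are removed, but $e_1,e_2,e_3$ must receive pairwise distinct colours. This is equivalent to list-edge-colouring a $K_3$ from $2$-element residual lists, and can fail in general (e.g.\ if all three residual lists equal $\{1,2\}$).

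To break this deadlock I would refine the choice of $v$ using the tree decomposition of $G$. The cleanest escape is to arrange that at least one $w_i$ satisfies $\deg(w_i) < \Delta(G)$, so that the corresponding residual list has size $\geq 3$ and Hall's condition is automatically satisfied, yielding an SDR for the three edges at $v$. If no simplicial vertex of a 3-tree completion of $G$ admits this, then the local structure around $v$ is highly constrained, and one may resort to a Kempe-chain exchange along one of the triangle edges $w_iw_j$ inside $G-v$ to reroute a colour and open a fresh slot for some $e_i$.

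The main obstacle is to certify that in every minimal counterexample at least one of these reducible configurations must appear; this requires a careful structural analysis of a 3-tree completion of $G$ around $v$, ruling out the simultaneous occurrence of all degree and adjacency patterns that would block both the SDR argument and the Kempe swap. I expect this case analysis, rather than the colour-counting itself, to be the technical heart of the proof.
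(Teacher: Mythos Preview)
Your plan has a genuine gap at the step where you fall back on a Kempe-chain exchange. Kempe swaps are the engine of Vizing's theorem for ordinary edge-colouring, but they do not transfer to the list setting: swapping colours $a$ and $b$ along an $(a,b)$-alternating path in $G-v$ is only legal if every edge on that path has both $a$ and $b$ in its list, and nothing in your hypotheses guarantees this. So once the three residual lists at $v$ collapse to the same $2$-set, you have no mechanism left to repair the colouring. Your preferred escape --- choosing $v$ so that some neighbour has degree below $\Delta(G)$ --- can genuinely fail: take $K_5$ minus an edge (two $K_4$'s glued along a triangle). This graph has tree-width $3$, $\Delta=4$, and its only vertices of degree $\le 3$ are the two non-adjacent ones, each of whose neighbours all have degree $4$. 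Thus the ``highly constrained local structure'' you allude to does occur, and the case analysis you defer is exactly where the proof lives; as written, neither branch of your dichotomy closes.

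The paper sidesteps the Kempe-chain obstruction in two ways. First, it proves a stronger statement by induction: each list $L(vw)$ need only have size $\max(\deg(v),\deg(w))+1$, a local condition that survives vertex deletion more robustly than the global bound $\Delta(G)+1$. Second, instead of removing a single degree-$3$ vertex, it uses the leaf structure of a smooth tree decomposition to locate either two non-adjacent degree-$3$ vertices with at most four neighbours in total, or a degree-$3$ vertex adjacent to a vertex of degree at most $4$; in both cases the extension problem becomes a small bipartite or near-bipartite instance that is finished off with Galvin's theorem and an elementary lemma about list-colouring a cycle with a pendant edge. No recolouring of the inductively obtained colouring is ever needed.
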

 There are graphs for which this is a sharp bound, see Figure~\ref{fig:counterexample}. Conjecture~\ref{con:total-colouring-conjecture}
 has been proved for graphs of maximum degree at most 5 by Kostochka~\cite{journals/dm/Kostochka96} and 
 Rosenfeld~\cite{raey}. 
 We will use this and a slight variation of the proof of Theorem~\ref{thm:treewidth-leq-3_implies_delta+1-choosable} to show Conjecture~\ref{con:total-colouring-conjecture} for graphs of  tree-width 3 in Section~\ref{sec:total}.
\begin{theorem}
\label{thm:treewidth-leq-3_implies_total_delta+2_colourable}
 For a graph $G$ of tree-width 3 it holds that $\chi''(G) \leq \Delta(G) +2$.
\end{theorem}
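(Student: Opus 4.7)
The plan is to mirror the argument for Theorem~\ref{thm:treewidth-leq-3_implies_delta+1-choosable}, modified to accommodate vertex colours. Since the Kostochka--Rosenfeld theorem yields the conclusion whenever $\Delta(G) \leq 5$, I would henceforth assume $\Delta(G) \geq 6$ and proceed by induction on $|V(G)|$, with the small-order base cases absorbed into the $\Delta \leq 5$ regime.

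For the inductive step, the key structural fact is that every graph of tree-width at most 3 contains a vertex $v$ with $\deg(v) \leq 3$; more precisely, a leaf bag in a tree decomposition of width 3 supplies a vertex $v$ whose neighbourhood $N(v) = \{w_1, \ldots, w_d\}$ has size at most 3 and sits inside a single bag. Set $G' := G - v$; this is again of tree-width at most 3 and has $\Delta(G') \leq \Delta(G)$, so by induction there is a proper total colouring $\phi$ of $G'$ with $\Delta(G) + 2$ colours. It remains to extend $\phi$ to $v$ and to the edges $vw_1, \ldots, vw_d$. For each uncoloured object $x$, the constraints imposed by $\phi$ define a list $L(x)$ of available colours satisfying $|L(v)| \geq \Delta(G) - 1$ and $|L(vw_i)| \geq 2$ (the latter being tight only when $\deg_G(w_i) = \Delta(G)$). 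The extension exists exactly when these lists admit a system of distinct representatives.

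The principal obstacle is this extension step. When $\deg(v) \leq 2$ the size bounds make Hall's condition immediate, so the real difficulty is $d = 3$: the objects $v, vw_1, vw_2, vw_3$ induce a $K_4$ in the total graph, and $K_4$ is not $2$-choosable, so a naive greedy argument is not enough. However, $|L(v)|$ is much larger than required, so Hall's theorem can only fail on the edge side, and only when the three edge lists $L(vw_1), L(vw_2), L(vw_3)$ coincide as a single two-element set. This is an extremely rigid local configuration: it forces each $w_i$ to have the same palette of $\Delta(G)$ colours, whose complement is a fixed pair. I would resolve it, following the template of the proof of Theorem~\ref{thm:treewidth-leq-3_implies_delta+1-choosable}, either by invoking adjacencies among the $w_i$ forced by their being contained in a common bag of the tree decomposition, or by performing a short Kempe-style swap inside $\phi$ that perturbs one of the edge lists before extending. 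Carrying this fix out cleanly, while keeping the choice of colour for $v$ consistent with the edge assignment, is where the main care is needed, and is presumably where the ``slight variation'' mentioned by the author resides.
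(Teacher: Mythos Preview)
Your high-level plan---invoke Kostochka--Rosenfeld for $\Delta(G)\le 5$ and run an induction for $\Delta(G)\ge 6$---matches the paper. The gap is in the inductive step. Deleting a single degree-$3$ vertex $v$ and extending is exactly where you get stuck, and neither of your two proposed fixes works as stated. Being contained in a common bag does \emph{not} force any adjacencies among the $w_i$, so that suggestion has no traction; and a Kempe-type recolouring, while perhaps feasible, is not the ``slight variation'' the paper intends and would need a real argument you have not supplied. As written, the bad case $L(vw_1)=L(vw_2)=L(vw_3)=\{a,b\}$ remains unresolved.

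The paper sidesteps this obstacle by extracting more structure from a smooth width-$3$ tree decomposition than merely ``some vertex has degree at most $3$''. Looking at a node $t$ of maximum height among those of degree $\ge 2$, one gets either (i) two non-adjacent degree-$3$ vertices $v_1,v_2$ whose neighbourhoods lie in a common $4$-set, in which case one deletes both, colours $G-v_1-v_2$, and extends via Galvin's theorem on the bipartite piece $G\langle\{v_1,v_2\}\rangle$; or (ii) a degree-$3$ vertex $w_0$ possessing a neighbour $w_1$ of degree at most $4$. Case (ii) is the replacement for your problematic $d=3$ situation: instead of deleting the vertex $w_0$, one deletes the single edge $w_0w_1$, total-colours $G-w_0w_1$ by minimality, then erases the colour on the vertex $w_0$ and recolours $w_0w_1$ and $w_0$ greedily. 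The edge $w_0w_1$ now sees at most $2+3+1=6$ coloured objects and the vertex $w_0$ at most $3+3=6$, against lists of size at least $\max(5,\Delta(G))+2\ge 7$. It is precisely the bound $\deg(w_1)\le 4$---unavailable if you only know that \emph{some} vertex has degree $\le 3$---that makes this count go through and eliminates the Hall-failure configuration entirely.
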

 A Halin graph is constructed by taking a planar embedding of a tree without vertices of degree 2 and  connecting all leaves of the tree with a cycle that passes around the tree in the natural cyclic order. Halin graphs  have tree-width 3~\cite{bodl-halin-tw}. In Section~\ref{sec:halin} we prove Conjecture~\ref{con:list-colouring-conjecture} for Halin graphs.
\begin{theorem}
 \label{cor:halin}
 For a Halin graph $G$ it holds that $\ch'(G) = \Delta(G)$.
\end{theorem}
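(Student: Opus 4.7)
The lower bound $\ch'(G) \geq \Delta(G)$ is immediate. I would prove the upper bound by induction on the number of internal vertices of the underlying tree $T$. The base case is when $T$ has a single internal vertex, so $G$ is a wheel $W_n$ with $n = \Delta(G) \geq 3$. One verifies $\ch'(W_n) = n$ directly: for any lists of size $n$, Hall's theorem provides a system of distinct representatives for the $n$ spokes (the condition is trivial since each list has size $n$), after which each rim edge retains at least $n - 2$ colours, enough to list-colour the rim cycle $C_n$, with the smallest case $W_3 = K_4$ settled by direct casework.

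For the inductive step, choose an internal vertex $v$ of $T$ whose children $u_1, \ldots, u_k$ are all leaves of $T$ (here $k \geq 2$, since $T$ has no vertex of degree two); let $p$ be the parent of $v$ in $T$, and let $u_0, u_{k+1}$ be the cycle-neighbours of $u_1, u_k$ outside $\{u_1, \ldots, u_k\}$. Contracting $\{v, u_1, \ldots, u_k\}$ to a single vertex $v'$ yields a smaller Halin graph $G'$ in which $v'$ is a leaf of the new tree attached to $p$ and lies on the new cycle between $u_0$ and $u_{k+1}$. Only $v'$ changes degree, so $\Delta(G') \leq \Delta(G)$. Given a list assignment $L$ on $G$ with $|L(e)| = \Delta(G)$, define $L'$ on $G'$ by keeping $L$ on persistent edges and reusing $L(vp), L(u_0 u_1), L(u_k u_{k+1})$ on the edges $v'p, v'u_0, v'u_{k+1}$ of $G'$. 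The inductive hypothesis yields a proper $L'$-colouring $\phi'$ of $G'$, which I transfer to $G$ by colouring $vp, u_0 u_1, u_k u_{k+1}$ with $\phi'(v'p), \phi'(v'u_0), \phi'(v'u_{k+1})$ respectively.

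The main technical obstacle is the extension: colouring the ``fan'' $F$ consisting of the $k$ spokes $vu_1, \ldots, vu_k$ and the $k-1$ rim-path edges $u_1 u_2, \ldots, u_{k-1} u_k$, consistently with the three pre-coloured boundary edges (whose colours $c_p, c_0, c_{k+1}$ are pairwise distinct, being mutually incident at $v'$ in $G'$). Each list has size $\Delta(G) \geq k + 1$ and loses at most two colours to pre-coloured neighbours. My plan is to first fix the two extremal spokes $vu_1, vu_k$ (the most constrained edges, each losing up to two colours from $c_p$ together with $c_0$ or $c_{k+1}$), then apply Hall's theorem to the remaining $k - 2$ middle spokes (whose effective lists retain size $\geq k - 2$, trivially satisfying Hall's condition), and finally colour the rim path edge-by-edge. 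Ensuring the rim remains colourable after all spoke choices requires careful bookkeeping, with the small cases $k \in \{2, 3\}$ dispatched by direct inspection.
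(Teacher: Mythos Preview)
Your inductive framework is natural, but the extension step has a genuine gap in the small cases that ``direct inspection'' cannot close. Consider $k=2$ (so $\deg(v)=3$) with $\Delta(G)=4$. After contracting and pulling back the three boundary colours $c_p,c_0,c_{3}$ (pairwise distinct), the three remaining fan edges $vu_1,\,vu_2,\,u_1u_2$ form a triangle in the line graph, with residual lists
\[
L(vu_1)\setminus\{c_p,c_0\},\quad L(vu_2)\setminus\{c_p,c_{3}\},\quad L(u_1u_2)\setminus\{c_0,c_{3}\},
\]
each of size at least $\Delta-2=2$. Take five distinct colours $c_p,c_0,c_{3},a,b$ and set
\[
L(vu_1)=\{c_p,c_0,a,b\},\quad L(vu_2)=\{c_p,c_{3},a,b\},\quad L(u_1u_2)=\{c_0,c_{3},a,b\}.
\]
Then all three residuals equal $\{a,b\}$, and the triangle cannot be properly coloured. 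Nothing in your argument controls which $\phi'$ the induction hands you, so you cannot avoid this configuration. The same obstruction is even sharper when $\Delta(G)=3$ (residuals of size $\geq 1$), and in the cubic case \emph{every} inductive step has $k=2$, so the difficulty does not go away. For $k=3$ with $\Delta=4$ your specific greedy order also breaks down: the last rim edge $u_2u_3$ must avoid four colours from a list of size four.

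The paper sidesteps exactly these problems in two ways. First, it treats the cubic case $\Delta=3$ separately, invoking the Ellingham--Goddyn theorem (planar, $d$-regular, $d$-edge-colourable $\Rightarrow$ $d$-edge-choosable) together with a direct proof that cubic Halin graphs are $3$-edge-colourable. Second, for $\Delta\geq 4$ it uses a different local reduction: rather than contracting the whole fan, it deletes just one or two leaves and adds shortcut edges so that the result is again Halin, leaving a residual instance whose list-sizes are large enough to be finished via a ``compatible colours'' counting lemma (Lemma~4.1 in the paper) and a small balloon lemma. Your contraction loses too much information at the boundary; to salvage the approach you would need either a more delicate reduction for $k=2$ or an independent argument for cubic Halin graphs.
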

\begin{figure}
\centering
 \tikzstyle{vertex}=[circle,draw,minimum size=8pt,inner sep=0pt]
\tikzstyle{edge} = [draw,-]
\tikzstyle{weight} = [font=\small]
  \begin{tikzpicture}[scale=0.4]

    \foreach \pos/\name in {{(1,1)/v_3}, {(-2,-1)/v_2}, {(4,-1)/v_4},
                            {(1,4)/v_1},{(0,1)/v_5}}
       \node[vertex, align=center] (\name) at \pos {};

       \foreach \source/ \dest in {v_1/v_2, v_1/v_3, v_1/v_4, v_1/v_5, 
                                         v_2/v_3, v_2/v_4, v_2/v_5,
                                         v_3/v_4, v_3/v_5}
       \path[edge] (\source) -- node[weight] {} (\dest);

    \foreach \vertex  in {}
        \path node[selected vertex] at (\vertex) {};
 \end{tikzpicture}
\caption{A graph of tree-width 3 and chromatic index 5.} 
\label{fig:counterexample}
\end{figure}
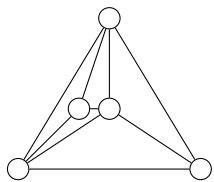
   While still open in general, Conjecture~\ref{con:list-colouring-conjecture} has been verified for  some particular families of graphs. Galvin  
 proved that $\ch'(G) = \Delta(G)$ for all bipartite multigraphs $G$~\cite{journals/jct/Galvin95}.
 Ellingham and Goddyn used a method of Alon and Tarsi to show  that every $d$-regular, $d$-edge-colourable, planar multigraph is $d$-edge-choosable \cite{journals/combinatorica/EllinghamG96}. 
 Some years later Juvan, Mohar and Thomas showed that $\ch'(G) = \Delta(G)$ holds for series parallel graphs $G$~\cite{Juvan99listedge-colorings}. 
 This class of graphs can also be characterized in terms of tree-width. 
 Series parallel graphs have tree-width at most 2. Conversely, 
 a graph has tree-width at most 2 if and only if every biconnected component is series parallel~\cite{Bodlaender1998}.
 This is why we are interested in list 
 edge-colouring graphs of tree-width 3. 
 The methods presented here are extended in~\cite{lang-13-1} in order to prove Conjecture \ref{con:list-colouring-conjecture} for graphs
 of tree-width 3 and a high maximum degree.

 \section{Graphs of tree-width 3 are $(\Delta(G)+1)$-choosable}
 \label{sec:preliminaries}
 We will mostly use standard notation as seen in \cite{Diestel00}. All graphs are finite and simple. 
 The size of a graph $G$ is $|V(G)|+|E(G)|$. $H$ is smaller
 than $G$ if its size is less than the size of $G$. 
 For a graph $G$ a \emph{tree decomposition} $(T,\mathcal{V})$ consists of a tree $T$ and a collection $\mathcal{V} = \{V_t \textit{ ; } t \in V(T) \}$ 
 of \emph{bags} $V_t \subset V(G)$ such that
 \begin{itemize}
  \item $V(G) = \bigcup_{t \in V(T)} V_t,$
  \item for each $vw \in E(G)$ there exists a vertex $t \in V(T)$ such that $v$, $w \in V_t$ and
  \item if $v \in V_{t_1} \cap V_{t_2}$ then $v \in V_t$ for all vertices $t$ that lie on the path connecting $t_1$ and $t_2$ in $T.$
 \end{itemize}
 
 A tree decomposition $(T,\mathcal{V})$ of $G$ has \emph{width} $k$, if all bags have a size of at most $k+1$. 
 The \emph{tree-width} of $G$ is the smallest number $k$ for which there exists a width $k$ tree decomposition of $G$. As
 our proofs are based on minimality it is important to mention that the graphs of tree-width at most $k$ form a minor-closed family. 
 We call a width $k$ tree decomposition $(T,\mathcal{V})$ \emph{smooth} if each bag has size $k+1$, no two
 bags are identical and for each $t_1t_2 \in E(T)$ the bags of $t_1$ and $t_2$ share exactly $k$ vertices. A graph of tree-width $k$ has a smooth
 width $k$ tree decomposition~\cite{Bodlaender1998}.
 Given a tree decomposition $(T,\mathcal{V})$ of $G$ where $T$ is rooted in some vertex $r \in V (T )$ we define the \emph{height} of
 any vertex $t \in V(T)$ to be the distance from $r$ to $t$.

 An \emph{instance of list edge-colouring} consists of a graph $G$ and an \emph{assignment of lists} 
 $L:E(G) \rightarrow \mathcal{P}(\mathbb{N})$ that maps the edges of $G$
  to \emph{lists of colours} $L(e)$. 
 A function $\mathcal{C}:E(G) \rightarrow \mathbb{N}$ is called an
 \emph{$L$-edge-colouring} of $G$, if $\mathcal{C}(e) \in L(e)$ for each $e \in E(G)$ and no two adjacent edges receive the same
 colour. $G$ is said to be \emph{$k$-edge-choosable}, if for each assignment of lists $L$ to the edges of $G$, where all lists have
 a size of at least $k$, there is an $L$-edge-colouring of $G$. The \emph{list chromatic index}, denoted by $\ch'(G)$, is the smallest integer $k$
 for which a graph $G$ is $k$-edge-choosable.

 
 Let $G$ be a graph and $L$ an assignment of lists to the edges of $G$.
 For an $L$-edge-colouring $\mathcal{C}$ of some subgraph $H \subset G$ we call a colour $c$
 of the list of an uncoloured edge $e$ \emph{available}, if no edge adjacent to $e$ has already been coloured with $c$. 
 The set of available colours of $e$ is called the \emph{list of remaining colours} and denoted by $L^{\mathcal{C}}(e)$. 
 We can always try to \emph{colour $G$ greedily}, by iteratively colouring 
 the edge with the smallest list of available colours with an arbitrary available colour. 
 
 For a subset of vertices $W \subset V(G),$ we denote by $G\langle W \rangle$ the graph with vertex set $W \cup N(W)$ and edge set 
 $E(G) \setminus E(G-W)$. 
 Let $G$ be a graph with an assignment of lists  $L$ to the edges of $G$ such that  each list $L(vw)$ has a size of at least $\max(\deg_G(v),\deg_G(w))+1$. Suppose  that for some proper  subset of vertices $W \subset V(G),$ we can find an $L$-edge-colouring $\mathcal{C}$ of the graph $G - W$. In order  to extend $\mathcal{C}$ to an $L$-edge-colouring of $G$ we need to find an $L^\mathcal{C}$-colouring of $G \langle W \rangle$. For an edge $w_1w_2 \in E(G)$ with $w_1,$ $w_2 \in W$ we have 
 \begin{equation}
 \label{equ:remaining-colours-1}
  |L^{\mathcal{C}}(w_1w_2)| = |L(w_1w_2)|.
 \end{equation}
 For  an edge $vw \in E(G)$ with $w \in W$ and $v \in V(G) \setminus W$, we have 
 \begin{equation}
 \label{equ:remaining-colours-2}
  |L^{\mathcal{C}}(vw)|\geq |L(vw)| - \deg_{G-W}(v) \geq \deg_{G\langle W \rangle}(v)+1,
 \end{equation} 
 since $\max(\deg_G(v),\deg_G(w)) \geq \deg_G(v) = \deg_{G-W}(v) + \deg_{G\langle W \rangle}(v).$
 In the proofs of the following results, we will generally assume that the size of each list is exactly the size of its given lower bound. 
 The next theorem has already been mentioned in the introduction.

\begin{theorem}[Galvin, 1994]
 \label{thm:galvin}
 Let $G$ be a bipartite graph; then $\ch'(G) = \Delta(G).$ 
 \end{theorem}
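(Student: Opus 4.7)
The plan is to combine K\"onig's bipartite edge-colouring theorem with the kernel method of Bondy, Boppana and Siegel (popularised by Alon and Tarsi). Since $\chi'(G) \ge \Delta(G)$ gives the trivial lower bound $\ch'(G) \ge \Delta(G)$, only the upper bound requires work. Fix a list assignment $L$ with $|L(e)| = \Delta(G)$ for every edge $e$.

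By K\"onig's theorem $G$ admits a proper edge colouring $\phi\colon E(G) \to \{1, \ldots, \Delta(G)\}$. I would use $\phi$ together with the bipartition $V(G) = A \cup B$ to orient the line graph $H := L(G)$ as follows: for two edges $e,f \in E(G)$ sharing an endpoint $v$, orient $e \to f$ in $H$ whenever either $v \in A$ and $\phi(e) < \phi(f)$, or $v \in B$ and $\phi(e) > \phi(f)$. A direct count using the fact that $\phi$ is proper shows that each vertex of $H$ has out-degree at most $\Delta(G)-1$ in this digraph $D$.

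The central step is to verify that $D$ is \emph{kernel-perfect}: every induced subdigraph has a kernel, meaning an independent set $K$ such that every vertex outside $K$ has an arc into $K$. I would argue this via a Gale--Shapley stable matching argument. An induced subdigraph of $D$ corresponds to a subfamily $F \subseteq E(G)$, on which $\phi$ induces, at every vertex $v$ of $G$, a preference ordering of the edges of $F$ incident to $v$; the orientation rule says precisely that the vertices of $A$ prefer edges of higher $\phi$-value and the vertices of $B$ prefer edges of lower $\phi$-value. A stable matching in the bipartite preference system $(A, B, F)$ then yields a kernel: independence follows because matched edges share no endpoint, and the stability condition translates to the statement that every unmatched edge has an outgoing arc into some matched edge.

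Granted kernel-perfectness, the Bondy--Boppana--Siegel lemma (proved by induction: pick any colour $c$ appearing in some list, take a kernel of the subdigraph induced by vertices whose list contains $c$, colour that kernel with $c$, delete $c$ from all other lists and recurse) yields that $H$ is $f$-choosable with $f(x) = d^+(x)+1 \le \Delta(G)$. Hence $\ch'(G) \le \Delta(G)$, completing the proof. I expect the kernel-perfectness verification to be the main obstacle: one must carefully translate the combinatorial data of the orientation into a preference system, invoke the existence of a stable matching, and match the stability axiom against the domination condition defining a kernel.
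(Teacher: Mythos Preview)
The paper does not actually prove this theorem: it is quoted as a known result with a citation to Galvin's 1995 paper and then used as a black box in the proofs of Lemmas~2.2 and~3.1. So there is no ``paper's own proof'' to compare against.

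That said, your proposal is a correct outline of Galvin's original argument. The orientation you describe indeed gives each vertex of the line graph out-degree at most $\Delta(G)-1$ (at the $A$-end of an edge coloured $c$ there are at most $\Delta(G)-c$ out-arcs, at the $B$-end at most $c-1$), and the translation between stability and the kernel condition is accurate: with your preference convention, an unmatched edge $e=ab$ that blocked would have both endpoints strictly preferring $e$, so stability forces one endpoint to prefer its matched edge, which is exactly an out-arc from $e$ into the matching. The Bondy--Boppana--Siegel lemma then finishes as you state. The only point worth tightening in a full write-up is the case analysis when one endpoint of an unmatched edge is itself unmatched in the stable matching; this is routine but should be made explicit.
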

  
 \label{sec:delta+1}
\begin{lemma}
\label{lem:ballon}
 Let $G$ be a cycle $e_1$, $\ldots$, $e_n$ with an additional edge $f$ that is incident exactly to the one vertex of $C$ that
 $e_1$ and $e_n$ share. For any
 assignment of lists $L$, where each list has a a size of at least 2 for all edges and the size of the list of $e_1$ is
 at least 3, there is an $L$-edge-colouring.
\end{lemma}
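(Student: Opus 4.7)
The plan is to translate the lemma into a standard list-colouring problem on a cycle via the line graph of $G$. The line graph $L(G)$ has vertex set $\{f, e_1, e_2, \ldots, e_n\}$, and since $f$ shares the common endpoint of $e_1$ and $e_n$ with both of them, its only neighbours in $L(G)$ are $e_1$ and $e_n$; the remaining adjacencies are $e_ie_{i+1}$ for $1 \leq i \leq n-1$. Hence $L(G)$ is itself a cycle $C_{n+1}$ with cyclic order $f, e_1, e_2, \ldots, e_n$. An $L$-edge-colouring of $G$ is nothing but a proper $L$-(vertex-)colouring of this cycle, in which the distinguished vertex $e_1$ carries a list of size at least $3$ and every other vertex carries a list of size at least $2$.

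To produce such a colouring, I would temporarily ignore $e_1$ and colour the remaining path $e_2, e_3, \ldots, e_n, f$ greedily in this order. The first vertex $e_2$ receives any colour from $L(e_2)$. Each subsequent vertex on the path has exactly one previously coloured neighbour and a list of size at least $2$, so at every step at least one colour remains available. This yields a valid partial $L$-colouring of all vertices of the cycle except $e_1$.

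It only remains to colour $e_1$. Its two neighbours in the line graph are $e_2$ and $f$, both already coloured, so at most two colours are forbidden. Since $|L(e_1)| \geq 3$, at least one colour from $L(e_1)$ survives, completing the $L$-edge-colouring of $G$.

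There is no real obstacle here: the argument is essentially the standard observation that a cycle is $L$-choosable whenever one vertex has slack at least one above the generic lower bound of $2$. The only point requiring care is the initial structural remark that the line graph of the balloon is a cycle of length $n+1$, which is what makes the extra colour in $L(e_1)$ exactly sufficient.
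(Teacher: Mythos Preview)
Your structural claim is wrong: the line graph of $G$ is \emph{not} the cycle $C_{n+1}$. The edges $e_1$ and $e_n$ share the vertex of $C$ to which $f$ is attached, so $e_1$, $e_n$ and $f$ are pairwise adjacent in the line graph and form a triangle there. In particular $e_1$ has three neighbours in the line graph, namely $e_2$, $e_n$ and $f$, not just $e_2$ and $f$ as you assert. Consequently your greedy scheme fails at the last step: after colouring $e_2,\ldots,e_n,f$ in this order, all three neighbours of $e_1$ carry colours, these colours may well be pairwise distinct, and a list of size $3$ at $e_1$ need not leave anything available. For a concrete failure take $n=3$ with $L(e_1)=\{1,2,3\}$, $L(e_2)=\{1,2\}$, $L(e_3)=\{1,3\}$, $L(f)=\{2,3\}$; your order gives $e_2\mapsto 1$, $e_3\mapsto 3$, $f\mapsto 2$, and then $e_1$ is blocked, although the instance is perfectly colourable.

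The paper's argument avoids this by not leaving $e_1$ to the end. It makes a targeted first choice: either pick a colour for $e_n$ that lies outside $L(f)$ (so that when $f$ is coloured last, the constraint coming from $e_n$ is vacuous), or, if $L(e_n)\subseteq L(f)$ and hence $L(e_n)=L(f)$, pick a colour for $e_1$ outside $L(e_n)\cup L(f)$ (so that the constraints $e_1$ imposes on $e_n$ and on $f$ are vacuous). Either way one forbidden colour is neutralised in advance, and the remaining greedy sweep goes through with lists of size $2$.
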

\begin{proof}
 If there is a colour $c \in L({e_n}) \setminus L({f})$, colour $e_n$ with $c$ and finish greedily. This yields $L(f) = L({e_n})$ and
 so there is a colour $c \in L({e_1}) \setminus (L(f) \cup L({e_n}))$. Colour $e_1$ with $c$ and finish greedily.
 \end{proof}

 \begin{figure}
\centering
\tikzstyle{vertex}=[circle,draw,minimum size=14pt,inner sep=0pt]
\tikzstyle{edge} = [draw,-]
\tikzstyle{weight} = [font=\small,draw,fill           = white,
                                  text           = black]
\begin{tikzpicture}[scale=1.2]
    \foreach \pos/\name in {{(1,1)/w_1}, {(-1,1)/w_0}, {(0,0)/w_2},{(0,2)/w_3},{(2,1)/w_4}}
     \node[vertex, align=center] (\name) at \pos {$\name$};

    \foreach \source/ \dest /\weight in {w_0/w_1/5, w_0/w_2/3, w_0/w_3/3, w_1/w_2/3, w_1/w_3/3, w_1/w_4/2}
       \path[edge] (\source) -- node[weight] {$\weight$} (\dest);
 \end{tikzpicture}
\caption{The integers on the edges indicate the minimum sizes of the respective lists.}
 \label{fig:treewidth-leq-3_implies_delta+1-choosable}
\end{figure}

The next lemma implies Theorem \ref{thm:treewidth-leq-3_implies_delta+1-choosable}.
\begin{lemma}
\label{lem:treewidth-leq-3_implies_delta+1-choosable}
  Let $G$ be a graph of tree-width at most 3 with an assignment of lists $L$ such that each list $L(vw)$ has a size of at least
  ${\max(\deg(v),\deg(w))}{+1}$ for each edge $vw \in E(G)$. Then there is an $L$-edge-colouring of $G$.
\end{lemma}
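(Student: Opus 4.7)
The proof would go by induction on $|V(G)|+|E(G)|$, with trivial base cases settled when $G$ has at most four vertices. For the inductive step, assume $G$ is connected and fix a smooth width-$3$ tree decomposition $(T,\mathcal{V})$ of $G$, which exists by the cited result of Bodlaender. Root $T$ at an arbitrary vertex, pick a deepest leaf $t \in V(T)$, and write $V_t = \{v_1,v_2,v_3,w\}$, where $w$ is the unique vertex of $V_t$ that does not belong to the parent bag. Since $w$ appears only in $V_t$, all neighbours of $w$ in $G$ lie in $\{v_1,v_2,v_3\}$, so $\deg_G(w) \leq 3$. The plan is to pick a small set $W \subseteq V_t$ containing $w$, apply the inductive hypothesis to $G-W$ (which still has tree-width at most $3$, and whose list assignment still satisfies the hypothesis since $\deg_{G-W}(v) \leq \deg_G(v)$), and extend the resulting colouring using the list-size estimates~\eqref{equ:remaining-colours-1}--\eqref{equ:remaining-colours-2}.

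When $\deg_G(w) \leq 2$, one takes $W=\{w\}$; by~\eqref{equ:remaining-colours-2} every edge of $G\langle\{w\}\rangle$ retains a list of size at least $2$, and these at most two mutually incident edges are coloured greedily. The substantive case is $\deg_G(w) = 3$, since the three star-edges at $w$ are pairwise incident and size-$2$ residual lists are insufficient. Here I would enlarge $W$ to two or three vertices drawn from $V_t$ (and possibly from the parent bag $\{v_1,v_2,v_3,u\}$), splitting according to which of the potential edges $v_iv_j$ are actually present in $G$ and how $u$ attaches. In each subcase $W$ is chosen so that $G-W$ is strictly smaller and still of tree-width at most $3$, while the boundary graph $G\langle W\rangle$ becomes a subgraph of the gadget depicted in Figure~\ref{fig:treewidth-leq-3_implies_delta+1-choosable}; the bounds~\eqref{equ:remaining-colours-1}--\eqref{equ:remaining-colours-2} then ensure that the residual lists meet the sizes indicated there. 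One finishes by first disposing of the pendant-like edge $w_1w_4$ and then invoking Lemma~\ref{lem:ballon} on the cycle $w_0 w_2 w_1 w_3 w_0$ with attached chord $w_0w_1$.

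The main obstacle is the case analysis in the $\deg_G(w)=3$ setting: depending on which of the edges among $v_1v_2,v_2v_3,v_1v_3$ are present, on how the parent bag attaches, and on whether the parent bag itself lies near maximum depth, one has to choose $W$ differently, and in each subcase verify both that the residual list sizes guaranteed by~\eqref{equ:remaining-colours-1}--\eqref{equ:remaining-colours-2} match the requirements of Figure~\ref{fig:treewidth-leq-3_implies_delta+1-choosable} and that the gadget extracted from $G\langle W\rangle$ is a subgraph to which the above colouring scheme actually applies. Lemma~\ref{lem:ballon} does the heavy lifting once the correct structural reduction has been carried out, so the bulk of the effort sits in pinning down the right choice of $W$ across the various local configurations near the leaf bag.
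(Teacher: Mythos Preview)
Your outline has the right skeleton (induction via a smooth tree decomposition, peel off a small set $W$ near a deepest leaf, extend using \eqref{equ:remaining-colours-1}--\eqref{equ:remaining-colours-2}), but the heart of the argument is missing, and the case split you propose is the wrong one.

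The difficulty in the $\deg(w)=3$ case is not which of the edges $v_iv_j$ are present; it is that $v_1,v_2,v_3$ may all have arbitrarily large degree, so none of them can be put into $W$ without destroying the list-size bounds you need on $G\langle W\rangle$. You must instead locate a \emph{second} vertex of bounded degree, and the edges inside $V_t$ do not tell you where it is. The paper finds it by looking one level up in $T$: pick a vertex $t$ of $T$ of maximum height among those of degree~$\geq 2$ (equivalently, the parent of your deepest leaf). Then either $t$ has two leaf children, yielding two non-adjacent degree-$3$ vertices with a common neighbourhood of size~$\leq 4$; or $t$ has a unique leaf child, and the vertex of $V_t$ not in the bag of $t$'s parent (one of your $v_i$) lies only in two bags and hence has degree~$\leq 4$. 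In the first case one removes the two degree-$3$ vertices and finishes with Galvin's theorem on the bipartite residual graph --- an ingredient your proposal never invokes, and which is genuinely needed. Only in the second case does one arrive at Figure~\ref{fig:treewidth-leq-3_implies_delta+1-choosable}, with $W=\{w_0,w_1\}$ where $\deg(w_0)=3$ and $\deg(w_1)\leq 4$.

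A smaller point: your endgame on the gadget does not work as written. Lemma~\ref{lem:ballon} treats a cycle with a \emph{pendant} edge, not a chord, so you cannot apply it to ``the cycle $w_0w_2w_1w_3w_0$ with attached chord $w_0w_1$''. The paper instead first colours $w_1w_2$ with a colour deliberately chosen outside $L(w_1w_4)$ (so the size-$2$ list at $w_1w_4$ survives), then colours $w_0w_2$ greedily; what remains is the triangle $w_0w_1w_3$ with pendant $w_1w_4$, which is exactly the configuration of Lemma~\ref{lem:ballon}. Colouring $w_1w_4$ first, as you suggest, drops the lists at $w_1w_2$ and $w_1w_3$ to size~$2$ and leaves a $4$-cycle with a chord that need not be colourable.
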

\begin{proof}
 We will assume that the lemma is wrong and obtain a contradiction. Let $G$ be a smallest counterexample
 to the lemma with an assignment of lists  $L$ to the edges of $G$
 for which each $L(vw)$ has a size of at least
 $\max(\deg(v),\deg(w))+1$ for each edge $vw \in E(G)$ such that there is no $L$-edge-colouring of $G$. 
 
 If there is a vertex $v \in V(G)$ of degree at most 2, we can find an $L$-edge-colouring of $G-v$ by minimality and extend this to an
 $L$-edge-colouring of $G$ by colouring the edges adjacent to $v$ greedily from the lists of remaining colours. Therefore we might assume that 
$G$ has a minimum degree of at least 3.  This implies that $G$ has tree-width 3, since the bag of any leaf of a width 2 tree-decomposition contains a vertex of degree at most 2.  Let $(T,\mathcal{V})$ be a smooth width 3 tree decomposition of $G$ where $T$ is rooted in some arbitrary vertex. Let $t \in V(T)$ be a vertex of degree at least 2 in $T$ and of maximum height.
 
 Suppose the neighbourhood of $t$ contains at least two leaves $t_1,~t_2 \in V(T)$. Since $(T,\mathcal{V})$ is smooth, there are vertices $v_1 \in V_{t_1}$ and $v_2\in V_{t_2}$ which are uniquely in $V_{t_1}$ and $V_{t_2}$. 
 We have $\deg(v_1) = \deg(v_2) =3,$ $v_1$ and $v_2$ are not adjacent and $|N(v_1) \cup N(v_2)| \leq 4$. By minimality we can find an $L$-edge-colouring of the graph $G-v_1-v_2$. If $v_1$ and $v_2$ have the same neighbourhood we can extend this to an $L$-edge-colouring of $G$  by applying Theorem~\ref{thm:galvin} to the bipartite graph $G\langle \{v_1, v_2\} \rangle,$ as their lists of remaining colours have each a size of at least 3  by~(\ref{equ:remaining-colours-2}).  If $v_1$ and $v_2$ have distinct neighbourhoods,  colour the two edges in $G\langle \{v_1, v_2\} \rangle$ that are adjacent to vertices outside of $N(v_1) \cap N(v_2)$ greedily. We can
 extend this to an $L$-edge-colouring of $G$ by applying Theorem \ref{thm:galvin} as the uncoloured edges form a 4-cycle and their lists of remaining colours have each at least size 2 by~(\ref{equ:remaining-colours-2}). 
 
 So we can assume that $t$ is adjacent to exactly one leaf $t_0 \in V(T)$. As $(T,\mathcal{V})$ is smooth and $t$ has maximum height, there are vertices $w_0 \in V_{t_0}$ and $w_1 \in V_{t} \cap V_{t_0}$ that appear uniquely in $V_{t_0}$ and $V_{t} \cap V_{t_0}$. Let  $V_t = \{w_1,w_2,w_3,w_4\}$. Since $w_0$ appears uniquely in the bag of the leaf $t_0,$ it has a degree of at most $3$. So $\deg(w_0) = 3$ and we may also assume that $w_0w_4 \notin E(G)$. Further, since the neighbours of $w_1$ are either in $V_{t_0}$ or $V_{t},$ $w_1$ has a degree of at most 4. 
 
 Now if $\deg(w_1) = 3$, we can choose an $L$-edge-colouring of $G-\{w_0,w_1\}$ by minimality and extend this to an $L$-edge-colouring of $G$ as follows. If $w_1w_4$ is an edge, colour it with an arbitrary colour and apply Lemma~\ref{lem:ballon} to $G\langle \{w_0, w_1\} \rangle-w_0w_4$. So $N(w_1) = \{w_0, w_2, v_1\}$ and we can colour $w_0w_1$ with an arbitrary colour and apply Theorem~\ref{thm:galvin} to the 4-cycle $G\langle \{w_0, w_1\} \rangle-w_0w_1$. This yields $\deg(w_1) = 4$. 
 
 We pick a final $L$-edge-colouring of  $G-w_0-w_1$ by minimality. In order to extend this to an $L$-edge-colouring of $G$ we need to colour the edges of the graph $G\langle \{w_0, w_1\} \rangle$  from the lists of remaining colours. This instance of list edge-colouring is shown
 in Figure \ref{fig:treewidth-leq-3_implies_delta+1-choosable}. The lower bounds on the lists of remaining colours are given by~(\ref{equ:remaining-colours-1})
 and~(\ref{equ:remaining-colours-2}). Colour $w_1w_2$ with some colour 
 $c \in L(w_1w_2) \setminus L(w_1w_4),$ colour the edge $w_2w_0$ greedily and finish as shown in Lemma \ref{lem:ballon}. A contradiction.
 \end{proof}

We are now ready to proof Theorem~\ref{thm:treewidth-leq-3_implies_delta+1-choosable}.

\begin{proof}[of Theorem~\ref{thm:treewidth-leq-3_implies_delta+1-choosable}]
Let $L$ be an assignment of lists to the edges of $G$ such that each list has a size of at least $\Delta(G) +1$. Since for each edge $vw \in E(G)$ we have
${\max(\deg(v),\deg(w))}{+1} \leq \Delta(G)+1,$ there is an $L$-edge-colouring of $G$ by Lemma~\ref{lem:treewidth-leq-3_implies_delta+1-choosable}.
 \end{proof}

\section{Graphs of tree-width 3 are $(\Delta +2)$-total-colourable}
\label{sec:total}
 As mentioned in the introduction Conjecture \ref{con:total-colouring-conjecture} is true for graphs of maximum degree
 at most 5. In this section we handle the case where a graph has tree-width 3, and maximum degree greater than 5 and use this to proof 
 Theorem~\ref{thm:treewidth-leq-3_implies_total_delta+2_colourable}.

 An \emph{instance of list total colouring} consists of a graph $G$ and an assignment of lists
 $L:V(G)\cup E(G) \rightarrow \mathcal{P}(\mathbb{N})$ to the vertices and edges of $G$. 
 A function $\mathcal{C}:V(G)\cup E(G) \rightarrow \mathbb{N}$ is called an
 \emph{$L$-total-colouring} of $G$, if $\mathcal{C}(v) \in L(v)$ for each $v \in V(G)$, 
 $\mathcal{C}(e) \in L(e)$ for each $e \in E(G)$, no two adjacent vertices receive the same colour, no
 two adjacant edges receive the same
 colour and no edge has the same colour as one of its ends. 
 $G$ is said to be \emph{$k$-total-choosable}, if for each assignment of lists $L$ to the vertices and edges of $G$, 
 where all lists have a size of at least $k$, there is an $L$-total-colouring of $G$. 
 
  Let $G$ be a graph and $L$ an assignment of lists to the edges and vertices of $G$.
 For an $L$-total-colouring $\mathcal{C}$ of some subgraph $H \subset G$ we call a colour $c$
 of the list of an uncoloured edge  $e$ \emph{available}, if no edge adjacent to $e$ and no endvertex of $e$ has already been coloured with $c$. 
 Similarly we call a colour $c$ of the lists of an uncoloured vertex $v$ \emph{available}, if none of the edges and vertices adjacent to $v$ have already
 been coloured with $c$.
 The set of available colours of an edge or vertex $x$ is called \emph{list of remaining colours} and denoted by $L^{\mathcal{C}}(x)$. 
 We can always try to \emph{colour $G$ greedily}, by iteratively colouring 
 the edge or vertex with the smallest list of available colours with an arbitrarily available colour.
 
 For $\Delta \geq 1$, let $G$ be a graph with an assignment of lists  $L$ to the edges and vertices of $G$ such that the list of each edge $vw$ and each vertex $w$ has a size of at  least $\Delta+2$. Suppose  that for some proper  subset of vertices $W \subset V(G),$ we can find an $L$-total-colouring $\mathcal{C}$ of the graph $G - W$. In order to extend $\mathcal{C}$ to an $L$-total-colouring of $G$ we need to find an $L^\mathcal{C}$-colouring of $G \langle W \rangle$. 
 Since $\Delta \geq \deg_G(v) = \deg_{G-W}(v) + \deg_{G\langle W \rangle}(v),$ we have for an edge $vw \in E(G)$ with $w \in W$ and $v \in V(G) \setminus W$
 \begin{equation}
 \label{equ:remaining-colours-total-2}
  |L^{\mathcal{C}}(vw)|\geq |L(vw)| - \deg_{G-W}(v) -1 \geq \deg_{G\langle W \rangle}(v)+1.
 \end{equation} 
 For any vertex $w \in W$ we have
 \begin{equation}
 \label{equ:remaining-colours-total-3}
  |L^{\mathcal{C}}(w)|\geq |L(w)| - |N(w)  \setminus W|.
 \end{equation} 
  The proof of the following lemma mirrors the proof of Lemma \ref{lem:treewidth-leq-3_implies_delta+1-choosable}. 

  \begin{lemma}
\label{lem:treewidth-leq-3_and_max_deg_geq_6_implies_delta+2-total-choosable}
  Let $G$ be a graph of tree-width at most 3 
  with an assignment of lists $L$ to the vertices and edges of $G$ such that
  each list has a size of at least $\max(5,\Delta(G)) +2$. Then $G$ has an $L$-total-colouring.
\end{lemma}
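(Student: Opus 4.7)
Mirroring the proof of Lemma~\ref{lem:treewidth-leq-3_implies_delta+1-choosable}, I would take a smallest counterexample $G$ with list assignment $L$ of sizes at least $\max(5,\Delta(G))+2$, and first eliminate vertices of degree at most two: if $\deg(v)\leq 2$, by minimality $L$-total-colour $G-v$, then pick $v$'s colour in $L^{\mathcal{C}}(v)$ (of size $\geq \max(5,\Delta)$ by~(\ref{equ:remaining-colours-total-3})) outside the $\leq 4$ colours appearing in the lists of its incident edges, and finally colour those edges greedily using~(\ref{equ:remaining-colours-total-2}). Hence the minimum degree is at least $3$, the tree-width is exactly $3$, and we fix a smooth width-$3$ decomposition rooted arbitrarily together with a non-leaf $t$ of maximum height.

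In the two-leaf case the argument is essentially identical to that of Lemma~\ref{lem:treewidth-leq-3_implies_delta+1-choosable}: delete the unique degree-$3$ vertices $v_1,v_2$ of the leaf bags, colour $G-\{v_1,v_2\}$ by minimality, and edge-colour the bipartite graph $G\langle\{v_1,v_2\}\rangle$ via Theorem~\ref{thm:galvin} (greedily colouring the two ``outside'' edges first when the neighbourhoods of $v_1,v_2$ differ). This works without change because the edge-list sizes supplied by~(\ref{equ:remaining-colours-total-2}) coincide with those used in the edge-colouring case. Both $v_1$ and $v_2$ are then coloured from their remaining lists of size $\geq \max(5,\Delta)-1\geq 4$, each seeing only three adjacent edge colours.

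The one-leaf case uses the same setup $V_t=\{w_1,\ldots,w_4\}$, $w_0\in V_{t_0}$ uniquely, $\deg(w_0)=3$, $w_0 w_4\notin E(G)$, and $\deg(w_1)\in\{3,4\}$. For $\deg(w_1)=3$ I would replay the edge-colouring argument of Lemma~\ref{lem:treewidth-leq-3_implies_delta+1-choosable} (either Lemma~\ref{lem:ballon} on a kite, or Theorem~\ref{thm:galvin} on a $4$-cycle after first colouring $w_0 w_1$), and then colour $w_0,w_1$ last; each has a remaining list of size $\geq \max(5,\Delta)\geq 5$ by~(\ref{equ:remaining-colours-total-3}), which covers the at most four adjacent already-coloured items. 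The tight subcase is $\deg(w_1)=4$: vertex $w_1$ has five constraints (four incident edges and the neighbour $w_0$) against a list of size only $\max(5,\Delta)-1$. The key observation is that the edge $w_0 w_1$ has both endpoints in $W$, so its list of remaining colours still satisfies $|L^{\mathcal{C}}(w_0 w_1)|=|L(w_0 w_1)|\geq \max(5,\Delta)+2$, large enough to absorb the seven conflicts in its neighbourhood if coloured last. I would therefore colour $w_1 w_4$ arbitrarily first (list size $\geq 2$), then apply Theorem~\ref{thm:galvin} to the bipartite $4$-cycle on $\{w_0 w_2, w_1 w_2, w_1 w_3, w_0 w_3\}$ (all edge lists now of size $\geq 2$), then colour the vertices $w_0$ and $w_1$ (with $\geq 4$ and $\geq 1$ choices respectively after deducting adjacent colours), and finally colour $w_0 w_1$, whose remaining list still has size $\geq \max(5,\Delta)-5\geq 1$.

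The main obstacle will be the tight accounting in the $\deg(w_1)=4$ subcase: each of the list sizes $|L^{\mathcal{C}}(w_1)|$ and $|L^{\mathcal{C}}(w_0 w_1)|$ just barely accommodates its constraints, so the proposed ordering -- delaying $w_0 w_1$ to the very end so as to exploit its unusually large list, and using Theorem~\ref{thm:galvin} rather than Lemma~\ref{lem:ballon} on the $4$-cycle -- is essentially forced, and one must verify that the intermediate lists keep size $\geq 2$ throughout.
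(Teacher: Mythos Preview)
Your degree-$\le 2$ case and two-leaf case are fine and agree with the paper. The one-leaf case, however, is where the paper diverges from a straight imitation of Lemma~\ref{lem:treewidth-leq-3_implies_delta+1-choosable}: rather than deleting the vertices $w_0,w_1$ and replaying the edge-colouring casework, the paper deletes only the \emph{edge} $w_0w_1$, total-colours $G-w_0w_1$ by minimality, erases the colour of $w_0$, and finishes greedily. This works because $w_0w_1$ then sees at most $(\deg(w_0)-1)+(\deg(w_1)-1)+1\le 2+3+1=6$ forbidden colours against a list of size $\ge 7$, and afterwards $w_0$ sees at most $3+3=6$ against $\ge 7$. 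No split into $\deg(w_1)=3$ versus $\deg(w_1)=4$, no appeal to Lemma~\ref{lem:ballon} or Theorem~\ref{thm:galvin}.

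Your longer route is nearly correct, but the final accounting in the $\deg(w_1)=4$ subcase slips. With your ordering ($w_1w_4$; Galvin on the $4$-cycle; then $w_0,w_1$; then $w_0w_1$), the edge $w_0w_1$ at the last step faces \emph{seven} constraints---the five coloured adjacent edges plus both coloured endpoints---against a list of size $\max(5,\Delta)+2$, leaving only $\max(5,\Delta)-5\ge 0$, not $\ge 1$; when $\Delta(G)\le 5$ this can genuinely be zero. (Relatedly, colouring $w_0$ before $w_1$, as your ``$\ge 4$ and $\ge 1$'' phrasing suggests, leaves $w_1$ with $4-3-1=0$ choices.) The fix is to reorder the last three items as $w_1$, then $w_0w_1$, then $w_0$: one gets $4-3\ge 1$, then $7-5-1\ge 1$, then $5-3-1\ge 1$. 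So your strategy is salvageable, but the paper's edge-deletion trick sidesteps all of this tight bookkeeping.
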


  \begin{proof}
 We will assume the lemma is wrong and obtain a contradiction.
 Let $G$ be a smallest counterexample
 to the lemma with an assignment of lists $L$ to the vertices and edges of $G$ 
 for which each list has a size of at least $\Delta:= \max(5,\Delta(G))+2$ such that there is no $L$-total-colouring of $G$.

 If there is a vertex $v$ of degree at most 2, we can find a $L$-total-colouring of $G-v$ by minimality. By~(\ref{equ:remaining-colours-total-2}) and 
 (\ref{equ:remaining-colours-total-3}) the lists of remaining colours of $G\langle \{v\} \rangle$ retain sizes large enough to extend this colouring greedily. 
 Therefore we might assume that $G$ has a minimum degree of at least 3. As before it follows that $G$ has tree-width 3. 
 Let $(T,\mathcal{V})$ be a smooth width 3 tree decomposition of $G$ where $T$ is rooted in some arbitrary vertex and let
 $t \in V(T)$ be a vertex of degree at least 2 in $T$ of maximum height.

 Suppose the neighbourhood of $t$
 contains at least two leaves $t_1,$ $t_2 \in V(T)$. Since $(T,\mathcal{V})$ is smooth and $t$ has maximum height, there are vertices $v_1 \in V_{t_1}$ and $v_2 \in V_{t_2}$ 
  that are uniquely in $V_{t_1}$ respectively $V_{t_2}$. 
 We have $\deg(v_1) = \deg(v_2) =3,$ $v_1$ and $v_2$ are not adjacent and $|N(v_1) \cup N(v_2)| \leq 4$.
 By minimality we can find an $L$-total-colouring of the graph
 $G-v_1-v_2$. 
 If $v_1$ and $v_2$ have the same neighbourhood we first apply Theorem~\ref{thm:galvin} to the bipartite graph induced by the 
 edges adjacent to $v_1$ and $v_2$. This is possible since the lists of remaining colours have sizes of at least 3 by~(\ref{equ:remaining-colours-total-2}).
 The lists of the vertices $v_1$ and $v_2$ retain at least one available colour by~(\ref{equ:remaining-colours-total-3}). So we can finish colouring 
 greedily to extend this to an $L$-edge-colouring of $G.$ 
  If $v_1$ and $v_2$ have distinct neighbourhoods, 
 colour the two uncoloured edges adjacent to vertices outside of $N(v_1) \cap N(v_2)$ greedily. We apply 
 Theorem \ref{thm:galvin} to the bipartite graph induced by the 
 edges between $\{v_1, v_2\}$ and $N(v_1) \cap N(v_2),$ which is possible because their lists of remaining colours have each a size of at least 2 
 by~(\ref{equ:remaining-colours-total-2}). As before colour the vertices $v_1$ and $v_2$ greedily to extend this to an $L$-edge-colouring of $G.$
 
 Thus we can assume that $t$ is incident to exactly one leaf $t_0 \in V(T)$. Since $(T,\mathcal{V})$ is smooth, there are vertices $w_0\in V_{t_0}$ and 
 $w_1 \in V_{t} \cap V_{t_0}$
  that appear uniquely in $V_{t_0}$ respectively $V_{t} \cap V_{t_0}$. By minimality there is 
 an $L$-total-colouring $\mathcal{C}$ of $G-w_0w_1$. Delete the colour of $w_0$ from $\mathcal{C}$. As $\deg(w_0) = 3$ and
 $\deg(w_1) \leq 4$ we can finish greedily. Contradiction.
 \end{proof}

\section{Halin graphs are $\Delta$-edge-choosable}
\label{sec:halin}
 
 To show the case where $\Delta(G) = 3$ of Theorem~\ref{cor:halin} we will use a
 result of Ellingham and Goddyn~\cite{journals/combinatorica/EllinghamG96}.
 \begin{theorem}
 \label{thm:ellignham}
  Let $G$ be a $d$-regular planar graph. If $G$ is $d$-edge-colourable, then $G$ is $d$-edge-choosable.
 \end{theorem}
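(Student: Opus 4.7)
The plan is to invoke the polynomial method of Alon and Tarsi on the line graph $H = L(G)$. Recall the key fact: if $D$ is an orientation of a graph $H$ such that the number $\mathrm{EE}(D)$ of even Eulerian subdigraphs of $D$ differs from the number $\mathrm{EO}(D)$ of odd ones, then $H$ is $L$-choosable for every list assignment with $|L(v)| > d^{+}_D(v)$. Thus it suffices to exhibit an orientation $D$ of $L(G)$ with maximum outdegree at most $d-1$ and with $\mathrm{EE}(D) \neq \mathrm{EO}(D)$: the vertices of $L(G)$ correspond to the edges of $G$, and lists of size $d = (d-1)+1$ then fulfil the Alon--Tarsi hypothesis.

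The first step uses $d$-edge-colourability. Fix a proper $d$-edge-colouring of $G$, which by $d$-regularity partitions $E(G)$ into $d$ perfect matchings $M_1, \ldots, M_d$. For each pair $i<j$ the subgraph $M_i \cup M_j$ of $G$ is a vertex-disjoint union of even cycles, and the induced subgraph of $L(G)$ on the corresponding vertices is likewise a disjoint union of cycles. These $\binom{d}{2}$ bichromatic cycle collections partition $E(L(G))$, and every vertex of $L(G)$ meets exactly two edges inside each such collection.

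The second step uses planarity. Fix a planar embedding of $G$; its rotation system at each vertex supplies a canonical direction in which to traverse each bichromatic cycle. I would use these rotations to orient $L(G)$ consistently around every bichromatic cycle, so that at every $e \in V(L(G))$ exactly one of the two neighbours inside each bichromatic cycle becomes an out-neighbour. This yields $d^+_D(e) = d-1$ for every $e$, matching the degree requirement exactly.

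The main obstacle, as in every application of Alon--Tarsi, is verifying $\mathrm{EE}(D) \neq \mathrm{EO}(D)$. The standard tactic is to design a sign-reversing involution on Eulerian subdigraphs of $D$ whose fixed points collapse to rigid, face-based configurations furnished by the planar embedding, so that the signed count reduces to an Euler-type evaluation that is manifestly non-zero. Constructing such an involution in a way that genuinely exploits planarity---rather than merely $d$-regularity or $d$-colourability, both of which hold on non-planar examples where the polynomial method need not cooperate---is, I expect, the delicate combinatorial heart of the proof.
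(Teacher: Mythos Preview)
The paper does not prove this theorem: it is quoted as a result of Ellingham and Goddyn~\cite{journals/combinatorica/EllinghamG96} and used as a black box, specifically to dispose of the cubic case in the proof of Theorem~\ref{cor:halin}. There is therefore no argument in the paper against which to compare your proposal.

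That said, your outline is the genuine Ellingham--Goddyn strategy: apply the Alon--Tarsi method to the line graph, use a fixed $d$-edge-colouring together with the planar rotation system to produce an orientation $D$ of $L(G)$ with all outdegrees equal to $d-1$, and then show that the signed Eulerian count is nonzero. You correctly single out this last step as the crux and explicitly leave it undone, so what you have is a sketch rather than a proof. For the record, Ellingham and Goddyn do not close the gap with an ad hoc sign-reversing involution. The key observation (going back to Penrose and Scheim in the cubic case) is that for a $d$-regular graph the Alon--Tarsi difference $\mathrm{EE}(D)-\mathrm{EO}(D)$ can be rewritten as a \emph{signed} sum over all proper $d$-edge-colourings of $G$; planarity, via the rotation system, is precisely what forces every term in that sum to carry the same sign, so the difference equals $\pm$(number of $d$-edge-colourings), which is nonzero by hypothesis. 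That identity is the missing ingredient in your proposal.
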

 
 The next Lemma is a corollary to the 4 Colour Theorem, which is equivalent to the statement that every bridgeless cubic planar graph chromatic index 3. We include it for sake of completeness. 
 
 \begin{lemma}
 \label{lem:halin-3-edge-colourable}
  Let $G$ be a 3-regular Halin graph. Then $G$ is 3-edge-colourable.
 \end{lemma}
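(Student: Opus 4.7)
The plan is to deduce this directly from the Tait form of the Four Colour Theorem: every bridgeless cubic planar multigraph admits a proper $3$-edge-colouring. So it suffices to show that a $3$-regular Halin graph $G$ is planar and bridgeless.

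First I would note that planarity is immediate from the construction: a Halin graph is obtained from a plane embedded tree $T$ (with no degree-$2$ vertices) by adding a cycle $C$ through the leaves in the cyclic order inherited from the embedding, and this construction yields a planar embedding of $G$. Next I would verify that $G$ is bridgeless by exhibiting a cycle through every edge. Any edge of $C$ lies on $C$ itself. For a tree edge $e = uv$ of $T$, removing $e$ splits $T$ into two subtrees $T_u \ni u$ and $T_v \ni v$; since neither $T_u$ nor $T_v$ is a single vertex of degree $2$ (as $T$ has no such vertices, and the endpoint is either a leaf or an internal vertex that retains degree $\geq 2$), each subtree contains at least one leaf of $T$. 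The cycle $C$ visits the leaves of $T_u$ and the leaves of $T_v$ in (contiguous) arcs, so there is a path in $C$ from a leaf of $T_u$ to a leaf of $T_v$ avoiding $e$. Combining this path with the two tree-paths inside $T_u$ and $T_v$ from $u$ and $v$ to those leaves gives a cycle through $e$; hence $e$ is not a bridge.

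Having shown that $G$ is a bridgeless cubic planar graph, I would invoke Tait's theorem (equivalent to the Four Colour Theorem) to conclude that $G$ is $3$-edge-colourable.

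The proof consists of routine structural observations about Halin graphs, and the only substantive ingredient is the Four Colour Theorem quoted in its edge-colouring form; there is no real obstacle to overcome beyond verifying bridgelessness, which in turn reduces to the elementary observation that each tree edge separates leaves on both sides and is therefore closed up into a cycle by the outer cycle $C$.
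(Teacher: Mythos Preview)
Your argument is correct: a $3$-regular Halin graph is planar by construction and bridgeless (indeed $3$-connected), so Tait's form of the Four Colour Theorem immediately gives a proper $3$-edge-colouring. The paper even remarks, just before stating the lemma, that it is a corollary of the Four Colour Theorem in exactly this way.

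However, the paper's own proof deliberately takes a different route and avoids the Four Colour Theorem altogether, giving instead a self-contained minimal-counterexample argument. In a smallest counterexample $G$ (with $|C|\geq 4$, the case $|C|=3$ being $K_4$) one locates an internal tree vertex $v$ of maximum height, whose two children $v_1,v_2$ are consecutive leaves on $C$; contracting this local ``fan'' produces a strictly smaller $3$-regular Halin graph $G_1$, which by minimality has a $3$-edge-colouring, and this colouring is then extended back over the six deleted edges around $v,v_1,v_2$. The trade-off is clear: your approach is a two-line deduction but imports the full strength of the Four Colour Theorem, whereas the paper's inductive reduction is longer but entirely elementary and keeps the lemma independent of such heavy machinery --- which is precisely why the paper says it includes the proof ``for sake of completeness.''
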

 
\begin{proof}
  We will assume that the lemma is wrong and obtain a contradiction. Consider a smallest counterexample $G$ to the
 lemma with a tree $T$ and a cycle $C$ that passes along the leaves of $T$. The cycle 
 $C$ has at least 4 vertices, since otherwise $G$ is the complete graph on 4 vertices and we are done. 
 Choose an arbitrary vertex to be the root of $T$ and let $v$ be a vertex of maximum height among the vertices  of degree 3 in $T$. 
 By maximality $v$ has exactly two neighbours $v_1$ and $v_2$ that lie on the cycle $C$. As $C$ has at least 4 vertices, there are distinct vertices $v_0,$ $v_3 \in V(C)$ to which $v_1$ respectively $v_2$ are 
 adjacent. Let $G_1$  be the graph obtained from $G - v_1$ by adding the edge $v_0v_2$ and $v_2w$, where $w$ is the third neighbour of $v$. $G_1$ is a 3-regular Halin graph smaller than $G$. So by minimality there is  an edge-colouring $\mathcal{C}_1$ of $G_1$ using the colours $\{1, 2, 3\}.$ We can extract an edge-colouring of $G-v_1-v_2-v$ from the edge-colouring of $G_1$ and extend this greedily to an edge colouring of $G$ using only the colours $\{1, 2, 3\}.$ A contradiction. 
 \end{proof}

 For a graph $G$ with an assignment of lists $L$ to the edges of $G$ and ${e,~f \in E(G)}$ we call two colours $c_1 \in L(e)$
 and $c_2 \in L(f)$ \emph{compatible} if $c_1 = c_2$ or if for each edge $g$ that is adjacent to both $e$ and $f$ the list $L(g)$ contains
 at most one of the two colours $c_1$ and $c_2$.
 The following lemma turns out to be quite useful in order solve instance of list edge-colourings of small graphs. The idea for the proof can be extracted from 
 \cite{Cariolaro_theedge-choosability}.
 \begin{lemma}
 \label{lem:cariolaro}
 Let $G$ be a graph with an assignment of lists $L$ to the edges of $G$ 
 and $v_1v_2,$ $w_1w_2 \in E(G)$ two edges that are not adjacent. If it holds that
 $$|L({v_1v_2})||L({w_1w_2})| > \sum_{v_iw_j \in E(G)} \lfloor\frac{|L({v_iw_j})|}{2}\rfloor  \lceil\frac{|L({v_iw_j})|}{2}\rceil$$ 
 then there are
 two compatible colours $c_1 \in L({v_1v_2})$ and $c_2 \in L({w_1w_2})$.
\end{lemma}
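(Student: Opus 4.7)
The plan is to argue by contradiction, and the key observation I would exploit is that any ``diagonal'' pair $(c,c)$ with $c\in L(v_1v_2)\cap L(w_1w_2)$ is automatically compatible, by the first clause of the definition. This diagonal trick is the step I expect to be easiest to overlook; without it one is tempted to bound incompatible pairs $(c_1,c_2)$ directly by $a_gb_g$ or $a_gb_g-i_g$ per edge, and a small example with $L(v_1v_2)=L(w_1w_2)=L(g)$ shows that such a naive count can far exceed $\lfloor |L(g)|/2\rfloor\lceil|L(g)|/2\rceil$. Once the diagonal remark is in hand, however, the rest is a short double-counting argument.

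Assuming that no compatible pair exists, the diagonal observation immediately forces $L(v_1v_2)\cap L(w_1w_2)=\emptyset$. Now take any pair $(c_1,c_2)\in L(v_1v_2)\times L(w_1w_2)$: disjointness gives $c_1\ne c_2$, and incompatibility then produces some edge $g$ adjacent to both $v_1v_2$ and $w_1w_2$ with $\{c_1,c_2\}\subseteq L(g)$. Since $v_1v_2$ and $w_1w_2$ are non-adjacent, any such $g$ must have one endpoint in $\{v_1,v_2\}$ and the other in $\{w_1,w_2\}$, i.e.\ $g=v_iw_j$. I would charge $(c_1,c_2)$ to one such $g$, thereby partitioning all $|L(v_1v_2)|\cdot|L(w_1w_2)|$ pairs among the edges indexing the right-hand side of the hypothesis.

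It then remains to bound, for each edge $g=v_iw_j$, the number of pairs charged to $g$. Writing $a_g:=|L(v_1v_2)\cap L(g)|$ and $b_g:=|L(w_1w_2)\cap L(g)|$, every charged pair lies in $(L(v_1v_2)\cap L(g))\times(L(w_1w_2)\cap L(g))$, so that at most $a_gb_g$ pairs go to $g$. Here the disjointness $L(v_1v_2)\cap L(w_1w_2)=\emptyset$ does its real work: the two intersections are disjoint subsets of $L(g)$, so $a_g+b_g\le|L(g)|$. Since the product of two non-negative integers with fixed sum $n$ is maximised when they are as balanced as possible, I get $a_gb_g\le\lfloor|L(g)|/2\rfloor\lceil|L(g)|/2\rceil$.

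Summing over all edges $v_iw_j$ then gives
\[
|L(v_1v_2)|\cdot|L(w_1w_2)|\;\le\;\sum_{v_iw_j\in E(G)}\left\lfloor\frac{|L(v_iw_j)|}{2}\right\rfloor\left\lceil\frac{|L(v_iw_j)|}{2}\right\rceil,
\]
contradicting the hypothesis and completing the argument.
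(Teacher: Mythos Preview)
Your argument is correct and follows essentially the same route as the paper's proof: first dispose of the case $L(v_1v_2)\cap L(w_1w_2)\neq\emptyset$ via the diagonal pair, then count the $|L(v_1v_2)|\cdot|L(w_1w_2)|$ disjoint-colour pairs against the edges $v_iw_j$ that can witness incompatibility. Your write-up is in fact more explicit than the paper's, spelling out via $a_g+b_g\le|L(g)|$ why each edge accounts for at most $\lfloor|L(g)|/2\rfloor\lceil|L(g)|/2\rceil$ pairs, which the paper leaves implicit.
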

 \begin{proof}
  If the lists
  of the edges $v_1v_2$ and $w_1w_2$ share a colour $c$ we are done.
  Therefore assume that $L({v_1v_2}) \cap L({w_1w_2}) = \emptyset$. This yields that there are  $|L({v_1v_2})||L({w_1w_2})|$ pairs of
  distinct colours $(c_1,c_2)$ with $c_1 \in L({v_1v_2})$ and $c_2 \in L({w_1w_2})$. But an edge $v_iw_j \in E(G)$ 
  for $1\leq i,j\leq 2$ can contain both colours of at most
  $\lfloor\frac{|L({v_iw_j})|}{2}\rfloor  \lceil\frac{|L({v_iw_j})|}{2}\rceil$ of those pairs.
  So if the above inequation holds we can find the desired two compatible colours
  $c_1 \in L({v_1v_2})$ and $c_2 \in L({w_1w_2})$.
  \end{proof}
 Remark that the inequality holds if all involved lists have a size of exactly $k,$ where $k$ is an odd number. The next
 lemma implies that Halin graphs of maximum degree $\Delta \geq 4$ are $\Delta$-edge-choosable.

\begin{figure}
\centering
\tikzstyle{vertex}=[circle,draw,minimum size=14pt,inner sep=0pt]
\tikzstyle{edge} = [draw,-]
\tikzstyle{weight} = [font=\small,draw,fill           = white,
                                  text           = black]
\begin{tikzpicture}[scale=1]
    \foreach \pos/\name in {{(1,2)/w_1}, {(2,3)/w_2},{(0,1)/w_0},
                            {(3,2)/w_3},{(2,0)/v}}
     \node[vertex, align=center] (\name) at \pos {$\name$};
    \foreach \pos/\name in {{(4,1)/w_4}}
     \node[vertex, align=center] (\name) at \pos {};

    \foreach \source/ \dest /\weight in {w_0/w_1/2, w_1/w_2/4, w_2/w_3/4, w_1/v/3, w_2/v/3, w_3/v/3, w_3/w_4/2}
       \path[edge] (\source) -- node[weight] {$\weight$} (\dest);
 \end{tikzpicture}
       \caption{The integers on the edges indicate the minimum sizes of the respective lists.}
         \label{fig:halin-1}
 \end{figure}
 \begin{figure}\centering
 \centering
\tikzstyle{vertex}=[circle,draw,minimum size=14pt,inner sep=0pt]
\tikzstyle{edge} = [draw,-]
\tikzstyle{weight} = [font=\small,draw,fill           = white,
                                  text           = black]
 \begin{tikzpicture}[scale=1]
    \foreach \pos/\name in {{(0,1)/v_0}, {(0.8,2)/v_1}, {(2.2,2)/v_2},
                            {(3,1)/v_3},{(1.5,0)/v}}
       \node[vertex, align=center] (\name) at \pos {$\name$};
    \foreach \source/ \dest /\weight in {v_0/v_1/2, v_1/v_2/4, v_2/v_3/2, v_1/v/3, v_2/v/3}
       \path[edge] (\source) -- node[weight] {$\weight$} (\dest);  
 \end{tikzpicture}
      \caption{The integers on the edges indicate the minimum sizes of the respective lists.}
        \label{fig:halin-2}
\end{figure}
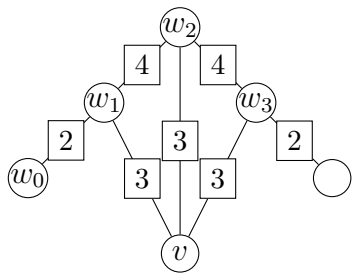

\begin{lemma}
\label{lem:halin}
 Let $G$ be a Halin graph with an assignment of lists $L$ to the edges of $G$ such that
 for each edge $vw \in E(G)$ the list $L(vw)$ has a size of at least 
  $\max(\deg(v) , \deg(w),4).$ Then there is an $L$-edge-colouring of $G$.
\end{lemma}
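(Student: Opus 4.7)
The plan is to argue by induction on $|V(G)|+|E(G)|$ via a minimum counterexample. Since Lemma~\ref{lem:halin-3-edge-colourable} together with Theorem~\ref{thm:ellignham} shows that cubic Halin graphs are $3$-edge-choosable, and the hypothesis supplies lists of size at least $4\ge 3$, the case $\Delta(G)=3$ is handled; so I assume $\Delta(G)\ge 4$.

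For the inductive step, root the tree $T$ of the Halin decomposition and let $v$ be an internal tree-vertex of maximum height; all children of $v$ in $T$ are leaves and hence degree-$3$ vertices on the cycle $C$, and $v$ has $k\ge 2$ of them. In the case $k=2$, label the children $v_1,v_2$ and their other cycle neighbours $v_0,v_3$. I would form a smaller Halin graph $G'$ by deleting $v_1,v_2$ from $G$ and adding the two edges $vv_0,vv_3$: in $G'$, the tree $T-\{v_1,v_2\}$ has $v$ as a new leaf, and $v$ enters the cycle in place of $v_1,v_2$. After equipping $G'$ with a list assignment agreeing with $L$ on the common edges and of minimal admissible size on $vv_0,vv_3$, minimality yields an edge-colouring whose restriction colours $G-\{v_1,v_2\}$. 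It remains to colour the five edges of $G\langle\{v_1,v_2\}\rangle$ from the remaining-colour lists of Figure~\ref{fig:halin-2}, which I would do by a direct case analysis splitting on whether or not $L(v_1v_2)$ is contained in $L(v_0v_1)\cup L(v_2v_3)$.

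In the case $k\ge 3$, pick three consecutive children $w_1,w_2,w_3$ with cycle neighbours $w_0,w_4$ outside $\{w_1,w_2,w_3\}$. I would again reduce to a smaller Halin graph by an analogous tree-surgery, chosen so that $v$'s new tree-degree is at least three: typically delete $\{w_1,w_2,w_3\}$ and add the two edges $vw_0,vw_4$, with a different local modification in the exceptional case $k=4$. Minimality then colours the common part of $G$, leaving the seven edges of $G\langle\{w_1,w_2,w_3\}\rangle$ (Figure~\ref{fig:halin-1}) to be coloured from the remaining-colour lists. For this I would apply Lemma~\ref{lem:cariolaro} first to the non-adjacent pendant pair $w_0w_1,w_3w_4$, whose endpoint sets are joined by no edge of $G\langle\{w_1,w_2,w_3\}\rangle$ so that the right-hand sum vanishes; then, after a careful choice of compatible colours for this pair, to the non-adjacent pair $vw_1, w_2w_3$, for which the remaining-list sizes give product $2\cdot 3=6$ against sum $2+1+2=5$. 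The three edges $w_1w_2, vw_2, vw_3$ that are left form a path in the line graph and can be coloured greedily by treating $vw_2$ first.

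The main obstacle is the case analysis needed for Figure~\ref{fig:halin-2}, since Lemma~\ref{lem:cariolaro} applied to the only natural non-adjacent pair of pendants does not yield a strict inequality, so the extension must be extracted by splitting on the relationship between $L(v_0v_1)$, $L(v_1v_2)$ and $L(v_2v_3)$. A similarly bespoke argument is also required for the subcase $k=4$ of Case~B, in which the standard reduction would leave $v$ with tree-degree two and must therefore be replaced by a local modification adapted to a five-edge instance.
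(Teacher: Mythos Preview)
Your high-level plan (smallest counterexample, reduce to a smaller Halin graph, then extend over the local configuration of Figure~\ref{fig:halin-1} or~\ref{fig:halin-2}) is the same as the paper's, but the reductions you describe are more complicated than necessary and, in Case~B, not correct as stated.

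For $k\ge 3$ you propose to delete $\{w_1,w_2,w_3\}$ and add the edges $vw_0,vw_4$, ``chosen so that $v$'s new tree-degree is at least three''. These two requirements conflict: if $v$ keeps tree-degree $\ge 3$ it remains an internal tree vertex and hence does not lie on the outer cycle, so $vw_0,vw_4$ have no legitimate place in a Halin decomposition (they cannot be tree edges, since $w_0,w_4$ are already leaves, nor cycle edges). If instead $v$ becomes a leaf (as actually happens for $k=3$), your stated goal fails. Either way you still need a bespoke treatment of $k=4$. The paper sidesteps all of this with a much lighter reduction: it deletes a \emph{single} leaf, forming $G_1=G-w_1+w_0w_2$, which is immediately Halin for every $\deg(v)\ge 4$; it then simply discards the colours on the edges at $w_2$ and $w_3$ to arrive at Figure~\ref{fig:halin-1}. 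No case distinction on $k$ is needed.

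For Figure~\ref{fig:halin-1} your first invocation of Lemma~\ref{lem:cariolaro} on $w_0w_1,w_3w_4$ is vacuous (no edge joins $\{w_0,w_1\}$ to $\{w_3,w_4\}$, so any two colours are compatible). The paper applies Lemma~\ref{lem:cariolaro} once, directly to $vw_1$ and $w_2w_3$ with the original list sizes ($3\cdot 4>2+2+4$), and then finishes greedily. Note also that your final greedy order is wrong: after colouring $vw_1$ and $w_2w_3$ the edge $vw_3$ may have only one remaining colour, so colouring $vw_2$ first can empty its list; $vw_3$ must come first.

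For Figure~\ref{fig:halin-2} you flag a case analysis as the main obstacle. The paper needs none: colour $v_0v_1$ first, and the remaining four edges form exactly a triangle with a pendant, handled immediately by Lemma~\ref{lem:ballon}. (The paper's reduction here is also slightly different---it deletes $v$ and $v_1$ rather than $v_1$ and $v_2$, reattaching $v_2$ to the parent $w$---but both reductions lead to the same five-edge instance.)
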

\begin{proof}
 We will assume that the lemma is wrong and obtain a contradiction. Consider a smallest counterexample $G$ to the
 lemma with a tree $T$ and a cycle $C$ that passes along the leaves of $T$. Let
  $L(vw)$ be an assignment of lists to the edges $ vw \in E(G)$, where each list has a size of at least $\max(\deg(v),  \deg(w),4)$ 
 such that there is no $L$-edge-colouring of $G$.
 We can assume that $G$ is not a complete graph on four vertices by Theorem~\ref{thm:ellignham}
 and hence $C$ has at least 4 vertices. 
 Choose an arbitrary vertex to be the root of $T$ and let $v$ be a vertex of maximum height among the vertices 
 of degree at least 3 in $T$. 
 By maximality $v$ has $\deg(v)-1$ neighbours that lie on the cycle $C$. 
 
 If $\deg(v) \geq 4$ then $v$ has three neighbours $w_1,$ $w_2$ and $w_3 \in V(C)$ with say $N(w_2) = \{v,w_1,w_3\}$ and $N(w_1)= \{v,w_0, w_2\}$. We denote by
 $G_1$ the graph obtained from $G - w_1$ by adding the edge $w_0w_2$.
  Observe that $G_1$ is a Halin graph,
 smaller than $G$ and the degrees of $w_0,$ $w_2$ and $v$ did not increase. Hence there is an $L$-edge-colouring $\mathcal{C}_1$ of 
 $G_1$ by minimality (where an arbitrary list of size 4 was assigned to $w_0w_2$). We can extract an $L$-colouring of the graph $G-w_1-w_2-w_3$ 
 from $\mathcal{C}_1$. In order to extend
 this to an $L$-edge-colouring of $G$ we need colour the graph $G\langle \{w_1,w_2,w_3\} \rangle$ from the lists of remaining colours. The lower bounds on
 the sizes of these lists  are given by~(\ref{equ:remaining-colours-1}) and~(\ref{equ:remaining-colours-2}).
 This instance of list edge-colouring is shown in Figure \ref{fig:halin-1}. 
 By Lemma \ref{lem:cariolaro} we can find two compatible colours $c_1 \in L(vw_1),$ $c_2 \in L(w_2w_3)$. Colour the respective
 edges with $c_1$ and $c_2$ and
 finish greedily. 
 
 Thus we might assume that $\deg(v) = 3$ and so $v$ has exactly two neighbours $v_1$ and $v_2 \in V(C)$. As
 $C$ has at least 4 vertices, there are distinct vertices $v_0,$ $v_3 \in V(C)$ to which $v_1$ respectively $v_2$ are adjacent. 
 Further, $v$ is adjacent to neither $v_0$ nor $v_2$. Let $G_2$  be the graph obtained from $G - v_1-v$ by adding the edge $v_0v_2$ and $v_2w$, where $w$ is the third neighbour of $v$. As before, $G_2$ is still a Halin
 graph, smaller than $G$ and the degree of $v_0$ and $v_2$ did not increase. So by minimality there is  an $L$-edge-colouring $\mathcal{C}_2$ of $G_2$ (where arbitrary lists of size $\max(4,\deg(w))$ was assigned to $v_0v_2$ and $v_2w$). We can extend $\mathcal{C}_2$ to an $L$-edge-colouring of $G$ by colouring the graph $G\langle \{v_1,v_2\} \rangle$ from the lists of remaining colours, which lower bounds
 are given by~(\ref{equ:remaining-colours-1}) and~(\ref{equ:remaining-colours-2}).  Colour the edge between $v$ and the vertex distinct from $v_1$ and $v_2$ greedily.
 The remaining instance of list edge-colouring is shown in Figure \ref{fig:halin-2}. Colour the edge $v_0v_1$ greedily and apply
 Lemma \ref{lem:ballon} to the rest. A contradiction. 
 \end{proof}
\begin{figure}
\centering
 \tikzstyle{vertex}=[circle,draw,minimum size=8pt,inner sep=0pt]
\tikzstyle{edge} = [draw,-]
\tikzstyle{weight} = [font=\small]

 \begin{tikzpicture}[scale=0.5]
    \foreach \pos/\name in {{(0,0)/v_0}, {(0,1)/v_1}, {(1,0)/v_2}, {(0,-1)/v_3}, {(-1,0)/v_4},
                             {(-1,2)/v_{11}}, {(1,2)/v_{12}}, {(2,1)/v_{21}}, {(2,-1)/v_{22}},
                             {(1,-2)/v_{31}}, {(-1,-2)/v_{32}}, {(-2,-1)/v_{41}}, {(-2,1)/v_{42}}}
       \node[vertex, align=center] (\name) at \pos {};
    \foreach \source/ \dest /\weight in {v_0/v_1/{abcd}, v_0/v_2/{a,b,c,d}, v_0/v_3/{a,b,c,d}, v_0/v_4/{a,b,c,d},
                                         v_1/v_{11}/x, v_1/v_{12}/x,
                                         v_2/v_{21}/x, v_2/v_{22}/x,
 v_3/v_{31}/x, v_3/v_{32}/x,
v_4/v_{41}/x, v_4/v_{42}/x,
v_{11}/v_{12}/x,v_{12}/v_{21}/x,v_{21}/v_{22}/x,v_{22}/v_{31}/x,v_{31}/v_{32}/x,v_{32}/v_{41}/x,v_{41}/v_{42}/x,v_{42}/v_{11}/x}
       \path[edge] (\source) -- node[weight] {} (\dest);
 \end{tikzpicture}
\caption{}
\label{fig:halin-bad}
\end{figure}

 Note that the lower bound of 4 on the list size in this result is necessary. For the graph shown in Figure \ref{fig:halin-bad} 
 the lists $L(vw) =  \{1, \ldots , \max(\deg(v),\deg(w))\}$ for each edge $vw$ do not permit an $L$-edge-colouring.

 \begin{proof}[Proof of Theorem~\ref{cor:halin}]
  Let $L$ be an assignment of lists to the edges of $G$ such that each list has a size of at least $\Delta(G)$. If $\Delta(G) \geq 4$, there is an $L$-colouring
  of $G$ by Lemma~\ref{lem:halin}. Otherwise $G$ is planar, 3-regular and can be 3-edge-coloured by Lemma~\ref{lem:halin-3-edge-colourable}. 
  Hence there is an $L$-colouring of $G$ by Theorem~\ref{thm:ellignham}.  
  \end{proof}

 \section*{Acknowledgements}
The author would like to thank Henning Bruhn-Fujimoto for listening and helpful remarks.

\end{document}